\documentclass{amsart}

\usepackage{graphicx}
\usepackage{hyperref}

\usepackage{subcaption}
\usepackage{tikz}
\usetikzlibrary{shapes}
\usetikzlibrary{positioning}

\theoremstyle{plain}
\newtheorem{theorem}{Theorem}
\newtheorem{corollary}[theorem]{Corollary}
\newtheorem{lemma}[theorem]{Lemma}

\newtheorem{problem}[theorem]{Problem}
\newtheorem{remark}[theorem]{Remark}

\begin{document}

\title{Bounds for the Vertex Chromatic Number of Connected Triangle-Free Graphs}

\author{S. Akbari$^{a}$, A. Beikmohammadi$^{b}$ \smallskip}

\maketitle

\begin{center}
$^a$Department of Mathematical Science, Sharif University of Technology \\
Email: \texttt{s\_akbari@sharif.edu} \\

\medskip
$^b$Department of Computer Science, Simon Fraser University \\
Email: \texttt{arash\_beikmohammadi@sfu.ca} \\
\end{center}

\begin{abstract}
It was recently shown that every connected graph of order $n \geq 5$ and size $m$ satisfies $\chi(G) \leq \left\lceil \frac{m}{\sqrt{n}} \right\rceil$, and it was asked whether the stronger inequality $\chi(G) \leq \left\lceil \frac{m}{2\sqrt{n}} \right\rceil + 1$ holds for every connected triangle-free graph. In this paper, we answer this question in the affirmative. In fact, we prove that every connected triangle-free graph $G$ with $G \not\cong C_5$ satisfies $\chi(G) \leq \left\lceil \frac{m}{\sqrt{5.5n}} \right\rceil + 1$, where the constant $\sqrt{5.5}$ cannot be replaced by any constant greater than or equal to $\sqrt{6}$, and the equality holds for the Gr\"{o}tzsch graph and for every odd cycle of length between $7$ and $21$. \\

\noindent Keywords: Vertex chromatic number, Triangle-free graph, Critical graph, Gr\"{o}tzsch graph.

\noindent MSC2020-Mathematics Subject Classification: 05C15, 05C35.
\end{abstract}

\section{Introduction}
Throughout this paper all graphs are simple, that is, with no loops and multiple edges. Let $G$ be a graph. We denote the edge set and the vertex set of $G$ by $E(G)$ and $V(G)$, respectively. The \textit{order} and \textit{size} of $G$ are $|V(G)|$ and $|E(G)|$, respectively. A graph $G$ is called an \textit{$(m,n)$-graph} if its size and its order are $m$ and $n$, respectively. As usual, $\delta(G)$ and $\Delta(G)$ denote the minimum degree and the maximum degree of $G$, and for simplicity we write $\delta$ and $\Delta$ instead of $\delta(G)$ and $\Delta(G)$. For a vertex $v$ of $G$, $N(v)$ denotes the set of neighbors of $v$ and $d(v) = |N(v)|$. We denote by $C_\ell$ the cycle of length $\ell$ and by $K_n$ the complete graph of order $n$. A graph is \textit{triangle-free} if it contains no $K_3$ as a subgraph, and $G - S$ denotes the graph obtained from $G$ by deleting the vertices of $S$ together with their incident edges.
The \textit{chromatic number} of $G$, denoted by $\chi(G)$, is the smallest number of colors needed to color all vertices of $G$ such that no pair of adjacent vertices gets the same color.
A graph $H$ is \textit{$k$-critical} if $\chi(H) = k$ and $\chi(H') < k$ for every proper subgraph $H'$ of $H$.
In~\cite{akbari2026some}, the following bound was established.

\begin{theorem}\cite{akbari2026some} \label{theorem: m over sqrt n}
Let $G$ be a connected $(m,n)$-graph of order at least $5$. Then
$$\chi(G) \leq \left\lceil \frac{m}{\sqrt{n}} \right\rceil.$$
\end{theorem}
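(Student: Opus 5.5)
We reduce to a critical subgraph and then combine two standard facts: a $k$-critical graph has minimum degree at least $k-1$, and the number of edges is at least $n-1$ (connectivity). Let $k=\chi(G)$. If $k\le 2$ there is nothing to prove, since $G$ is connected of order $n\ge 5$, so $m\ge n-1\ge 4$. Then $m/\sqrt{n}\ge (n-1)/\sqrt n>1$, giving $\lceil m/\sqrt n\rceil\ge 2$. So assume $k\ge 3$ and let $H$ be a $k$-critical subgraph of $G$, with order $h$ and size $m_H$. Criticality gives $\delta(H)\ge k-1$, hence $m_H\ge h(k-1)/2$. We also have $h\ge k$.

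The key step is to count edges of $G$ in two parts. The first part is the edges inside $H$. The second part is at least $n-h$ further edges, obtained by contracting $H$ to a single vertex: the result is still connected, so it has a spanning tree with $n-h$ edges, all outside $E(H)$. Thus
\[
m \ge \frac{h(k-1)}{2} + (n-h) = n + h\Bigl(\frac{k-1}{2}-1\Bigr) = n+\frac{h(k-3)}{2}.
\]
We want $m>(k-1)\sqrt n$, since then $\lceil m/\sqrt n\rceil\ge k$. For $k\ge 3$ the right-hand side above is increasing in $h$, so we use $h\ge k$ to get $m\ge n+k(k-3)/2$. It suffices to show
\[
n+\frac{k(k-3)}{2} > (k-1)\sqrt n .
\]
Set $t=\sqrt n$ and consider $t^2-(k-1)t+k(k-3)/2$ as a quadratic in $t$. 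Its discriminant is $(k-1)^2-2k(k-3)=-k^2+4k+1$, which is negative for $k\ge 5$. So for $k\ge 5$ the inequality holds for all $n$.

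The main obstacle is the small cases $k=3,4$, where the discriminant is positive and we need the hypothesis $n\ge5$.
\begin{itemize}
\item[$k=3$:] We need $m>2\sqrt n$. We have $m\ge n$, since $H$ contains a cycle and we add $n-h$ edges. Also $n>2\sqrt n$ exactly when $n>4$, which holds. (Here $h\ge3$ alone suffices.)
\item[$k=4$:] We need $n+h/2>3\sqrt n$ with $h\ge4$, i.e. $n+2>3\sqrt n$. The roots of $t^2-3t+2$ are $t=1,2$, so this holds exactly when $n>4$. Again $n\ge5$ suffices.
\end{itemize}
For completeness we check $k\ge5$ directly, since $h\ge k$ is used there. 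In that case $n\ge h\ge k$ is automatic, and the negative discriminant makes the inequality unconditional.

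The only delicate point is justifying the $n-h$ extra edges outside $H$, which is exactly the spanning-tree-of-the-contraction argument above; this is where connectivity of $G$ enters. Combining the cases gives $(k-1)\sqrt n<m$, i.e. $\chi(G)\le\lceil m/\sqrt n\rceil$.
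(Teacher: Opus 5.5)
Your proof is correct and is essentially the paper's own computer-free argument. The bound $m \ge m_H + (n-h) \ge n + \frac{h(k-3)}{2}$ is exactly Lemma~\ref{lemma: density}, and your contraction argument cleanly justifies its first inequality. Your discriminant condition $k^2-4k-1>0$ is the AM--GM step $k\sqrt{n}\le n+k^2/4$ from the proof of Theorem~\ref{theorem: no ceiling} with $k-1$ in place of $k$, which leaves only $k\in\{3,4\}$, and $n\ge 5$ disposes of those. By targeting $m>(k-1)\sqrt{n}$ rather than $m\ge k\sqrt{n}$, you in fact write out the ceiling version for every $n\ge 5$. The paper only asserts that this follows from ``the same circle of ideas'', since Theorem~\ref{theorem: no ceiling} itself covers only $n\ge 14$.
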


The proof of Theorem~\ref{theorem: m over sqrt n} given in~\cite{akbari2026some} relies on an exhaustive computer search over all connected graphs of order at most $9$. The following problem was also proposed and confirmed by exhaustive computer search over all connected and triangle-free graphs of order at most $9$.

\begin{problem}\cite{akbari2026some} \label{problem: triangle-free}
Is it true that if $G$ is a connected and triangle-free graph of order $n$, then
$\chi(G) \le \left\lceil \frac{m}{2\sqrt{n}} \right\rceil + 1$?
\end{problem}

In this paper, we provide a proof for Problem~\ref{problem: triangle-free}.
Let $f(k)$ denote the minimum order of a triangle-free graph with chromatic number at least $k$, then $f(3) = 5$, $f(4) = 11$~\cite{chvatal1974minimality}, $f(5) = 22$~\cite{jensen1995small} and $32 \leq f(6) \leq 40$~\cite{goedgebeur2020minimal}. All our arguments rest on the elementary estimate $f(k) \geq \frac{k(k-1)}{2}+2$, which we prove in Section~\ref{section: preliminaries}, together with a lemma which converts a dense critical subgraph of a connected graph into a lower bound on the size of the whole graph.

Our main result, Theorem~\ref{theorem: triangle-free main} below, is the following strengthening of Problem~\ref{problem: triangle-free}: every connected triangle-free $(m,n)$-graph $G$ with $G \not\cong C_5$ satisfies
$$\chi(G) \leq \left\lceil \frac{m}{\sqrt{5.5n}} \right\rceil + 1,$$
and, since $\sqrt{5.5} > 2$, this settles Problem~\ref{problem: triangle-free} affirmatively. The constant $\sqrt{5.5}$ cannot be replaced by any constant greater than or equal to $\sqrt{6}$ (see Remark~\ref{remark: constant}). The same circle of ideas gives a proof of Theorem~\ref{theorem: m over sqrt n} which uses no computer search, confirms the observation of~\cite{akbari2026some} on the redundancy of the ceiling function in a sharp form.

\section{Preliminaries} \label{section: preliminaries}

\begin{lemma} \label{lemma: density}
Let $G$ be a connected $(m,n)$-graph with $\chi(G) = k \geq 3$ and let $H$ be a $k$-critical subgraph of $G$ of order $n_H$. Then
$$m \geq n + \frac{(k-3)\,n_H}{2}.$$
\end{lemma}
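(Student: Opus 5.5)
Since $H$ is $k$-critical, every vertex of $H$ has degree at least $k-1$ in $H$: if some $v$ had $d_H(v) \le k-2$, then a $(k-1)$-colouring of the proper subgraph $H-v$ would extend to $v$, giving $\chi(H)\le k-1$. So $|E(H)| \ge \frac{(k-1)n_H}{2}$. It remains to account for the vertices of $G$ outside $H$ and for the edges needed to keep $G$ connected.

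Let $m_H = |E(H)|$. Because $G$ is connected, a spanning forest of $H$ together with some edges of $G$ outside $E(H)$ can be extended to a spanning tree of $G$. Concretely, contract $H$ to a single vertex; the resulting graph is still connected and has $n - n_H + 1$ vertices. It therefore has at least $n-n_H$ edges, and each of these comes from an edge of $G$ not in $E(H)$. Hence $|E(G)\setminus E(H)| \ge n - n_H$. The contraction may create parallel edges, but that only helps, since we need just a spanning tree of the contracted multigraph, which has exactly $n-n_H$ edges, each a distinct edge of $G$ outside $H$.

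Adding the two bounds,
$$m \ge \frac{(k-1)n_H}{2} + n - n_H = n + \frac{(k-3)n_H}{2},$$
as required.

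The only point needing care is that the edges counted outside $H$ are genuinely disjoint from $E(H)$ and distinct from one another. Arguing via a spanning tree of $G/H$, whose edges correspond injectively to edges of $G$ with at least one endpoint outside $V(H)$, settles this. Connectivity of $H$ itself is not needed, although a critical graph is in fact connected. The hypothesis $k\ge3$ plays no role in the argument; it only makes the bound meaningful.
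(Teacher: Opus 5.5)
Your proof is correct and is essentially the paper's argument: $\delta(H)\ge k-1$ gives $m_H \ge \frac{(k-1)n_H}{2}$, and connectivity of $G$ gives $m \ge m_H + n - n_H$. Your contraction of $V(H)$ to a single vertex just spells out the second step, which the paper states in one line while invoking connectivity of both $G$ and $H$; as you note, only the connectivity of $G$ is actually needed.
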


\begin{proof}
By~\cite[p.~194]{west2001introduction}, we have $\delta(H) \geq k-1$. So $m_H \geq \frac{(k-1)n_H}{2}$. Since $G$ and $H$ are connected, we have
$$m \geq m_H + n - n_H \geq \frac{(k-1)n_H}{2} + n - n_H = n + \frac{(k-3)n_H}{2},$$
as desired.
\end{proof}

The next lemma gives a lower bound for the order of a triangle-free graph with a given chromatic number. For an integer $k \geq 3$, let $f(k)$ denote the minimum order of a triangle-free graph whose chromatic number is at least $k$. Obviously, $f(3) = 5$.

\begin{lemma} \label{lemma: f(k)}
For every integer $k \geq 4$, $f(k) \geq f(k-1) + k - 1$. Consequently,
$$f(k) \geq \frac{k(k-1)}{2} + 2 = \frac{k^2-k+4}{2} \; , \qquad k \geq 3.$$
\end{lemma}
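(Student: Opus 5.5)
The plan is to take a minimum counterexample and delete a closed neighbourhood. Fix $k \geq 4$ and let $G$ be a triangle-free graph of order $f(k)$ with $\chi(G) \geq k$. Passing to a $k$-critical subgraph does not increase the order, and the order cannot drop below $f(k)$ by minimality. So we may assume $G$ is $k$-critical. In particular $\chi(G)=k$, and by the fact already used in the proof of Lemma~\ref{lemma: density}, every vertex satisfies $d(v) \geq \delta(G) \geq k-1$.

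Now pick any vertex $v$ and set $G' = G - N(v)$. We want $\chi(G') \geq k-1$. Suppose instead that $G'$ has a proper colouring with $k-2$ colours. Since $G$ is triangle-free, $N(v)$ is an independent set, so we can give all of $N(v)$ one new colour. Every neighbour of a vertex in $N(v)$ lies outside $N(v)$, so this is a proper $(k-1)$-colouring of $G$. (Here $v$ itself stays in $G'$ and keeps its old colour, which is fine because $v$ is adjacent only to vertices carrying the new colour.) This contradicts $\chi(G)=k$.

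So $G'$ is a triangle-free graph with $\chi(G') \geq k-1$. It follows that
$$f(k) = |V(G)| = |V(G')| + d(v) \geq f(k-1) + k - 1,$$
which is the recursion.

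The closed form then follows by induction from $f(3)=5 = \frac{3\cdot 2}{2}+2$. Assuming $f(k-1) \geq \frac{(k-1)(k-2)}{2}+2$, we get
$$f(k) \geq \frac{(k-1)(k-2)}{2}+2+(k-1) = \frac{k(k-1)}{2}+2.$$

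The only delicate point is recolouring $N(v)$ with a single fresh colour. This step is exactly where triangle-freeness is used, and it is also where we need $v \notin N(v)$ so that the count $|V(G)| = |V(G')| + d(v)$ is correct. The rest is routine.
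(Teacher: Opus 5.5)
Your proposal is correct and follows essentially the same route as the paper: pass to a $k$-critical subgraph so that $d(v)\ge k-1$, delete the independent set $N(v)$ (which lowers $\chi$ by at most one), and induct from $f(3)=5$. The only differences are cosmetic: you start from a minimum-order example rather than an arbitrary one, and you spell out the single-fresh-colour argument that the paper only states.
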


\begin{proof}
Let $G$ be a triangle-free graph with $\chi(G) \geq k$ and let $H$ be a $k$-critical subgraph of $G$. Clearly, $H$ is triangle-free and $\delta(H) \geq k-1$. Let $v \in V(H)$. Since $H$ is triangle-free, $N(v)$ is an independent set of $H$. We also have $|N(v)| = d(v) \geq k-1$. Deleting an independent set from a graph decreases its chromatic number by at most one, so $\chi(H - N(v)) \geq k-1$. Moreover $H - N(v)$ is triangle-free. Therefore $|V(H)| - |N(v)| \geq f(k-1)$, and we find that
$$|V(G)| \geq |V(H)| \geq f(k-1) + k - 1.$$
So $f(k) \geq f(k-1)+k-1$. Since $f(3) = 5$, by induction on $k$ we have
$$f(k) \geq 5 + \sum_{i=3}^{k-1} i = \frac{k(k-1)}{2} + 2,$$
as desired.
\end{proof}

\section{The bound $m/\sqrt{n}$ revisited}

It was observed in~\cite{akbari2026some} that the ceiling function in Theorem~\ref{theorem: m over sqrt n} appears to be redundant for all but finitely many connected graphs. The following result confirms this in a sharp form.

\begin{theorem} \label{theorem: no ceiling}
Let $G$ be a connected $(m,n)$-graph of order $n \geq 14$. Then
\[
  \chi(G) \leq \frac{m}{\sqrt{n}} .
\]
Moreover, the bound $14$ is best possible.
\end{theorem}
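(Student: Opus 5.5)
The plan is to combine Lemma~\ref{lemma: density} with the trivial bound $n_H \geq k$ for a $k$-critical graph $H$, and then do a short optimization in $k$. Let $k = \chi(G)$ and put $s = \sqrt{n}$.

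\emph{Small $k$.} If $k \leq 2$, connectivity gives $m \geq n-1$. It then suffices that $n - 1 \geq 2\sqrt{n}$, which holds for $n \geq 6$, hence for $n \geq 14$.

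\emph{The case $k \geq 3$.} Take a $k$-critical subgraph $H$ of $G$. Since $\delta(H) \geq k-1$, we have $n_H \geq k$. Lemma~\ref{lemma: density} then gives
\[
m \;\geq\; n + \frac{(k-3)n_H}{2} \;\geq\; n + \frac{k(k-3)}{2},
\]
using $k - 3 \geq 0$. So it suffices to show $g(k) := s^2 + \tfrac{k^2 - 3k}{2} - ks \geq 0$ for every real $k$. Complete the square with $k = s + \tfrac32 + t$. This gives
\[
g(k) \;=\; \frac{t^2}{2} + \frac{s^2 - 3s - \tfrac94}{2},
\]
which is nonnegative as soon as $s \geq \tfrac{3 + \sqrt{18}}{2} \approx 3.62$, i.e.\ $n \geq 13.11$. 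Hence $n \geq 14$ suffices, and in fact we do not even need $k$ to be an integer. This gives $\chi(G) \leq m/\sqrt{n}$.

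\emph{Sharpness of $14$.} The computation shows exactly where $n = 13$ can fail: the minimum of $g$ sits near $k = \sqrt{13} + 1.5 \approx 5.1$. This suggests taking $k = 5$, $H = K_5$ (so that $m_H = \frac{(k-1)n_H}{2}$ is tight), and attaching a tree on the remaining $8$ vertices, say a pendant path. The resulting graph is connected with $n = 13$, $\chi = 5$ and $m = 10 + 8 = 18$. Since $18/\sqrt{13} \approx 4.992 < 5$, the inequality fails at $n = 13$, so the bound $14$ cannot be lowered.

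The only delicate point is this sharpness check: one must verify that equality can hold simultaneously in both estimates, $n_H = k$ and $m = m_H + n - n_H$. Taking $H = K_k$ with a tree attached achieves both, so I do not expect any real obstacle.
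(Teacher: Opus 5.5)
Your proof is correct and follows the paper's route: Lemma~\ref{lemma: density} with $n_H \geq k$, a separate easy case $k \leq 2$, and $K_5$ with a tree on $8$ further vertices for sharpness. The one difference is in the final algebra. You verify $n + \frac{k(k-3)}{2} \geq k\sqrt{n}$ in one step by completing the square in $k$, which works for all real $k$ once $n \geq \frac{27+18\sqrt{2}}{4} \approx 13.11$. The paper instead uses $k\sqrt{n} \leq n + \frac{k^2}{4}$ to force $k \in \{3,4,5\}$ and then checks those three cases separately.
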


\begin{proof}
By contradiction suppose that $k = \chi(G) > \frac{m}{\sqrt{n}}$. Then
$$
m < k\sqrt{n} \leq n + \frac{k^2}{4}.
$$
If $k = 1$, then $n = 1$. If $k = 2$, then $n-1 \leq m < 2\sqrt{n}$, so $n \leq 5$.

Let $k \geq 3$ and let $H$ be a $k$-critical subgraph of $G$ of order $n_H \geq k$. By Lemma~\ref{lemma: density} and the fact that $m < n + \frac{k^2}{4}$,
$$
\frac{(k-3)n_H}{2} \leq m-n < \frac{k^2}{4}.
$$
Replacing $n_H$ by $k$ give us $2k(k-3) < k^2$, so $k \in \{3,4,5\}$. We consider these three cases separately, in each case combining the lower bound for $m$ given by Lemma~\ref{lemma: density} with the upper bound $m < k\sqrt{n}$.

If $k=3$, then $n \leq m < 3\sqrt{n}$, so $n \leq 8$.
If $k=4$, then $n_H \geq 4$ gives $m \geq n+2$, so $n + 2 < 4\sqrt{n}$, which implies $n \leq 11$.
If $k=5$, then $n_H \geq 5$ gives $m \geq n+5$, so $n+5 < 5\sqrt{n}$, which implies $n \leq 13$.
In every case $n \leq 13$, a contradiction.

For the sharpness, let $G$ be the graph obtained from $K_5$ by attaching $8$ pendant vertices to one of its vertices. Then $G$ is connected with $n = 13$, $m = 18$ and $\chi(G) = 5$, but
$$\frac{m}{\sqrt{n}} = \frac{18}{\sqrt{13}} \approx 4.993 < 5 = \chi(G).$$
Hence the conclusion of the theorem fails for this graph of order $13$.
\end{proof}

\section{Connected triangle-free graphs}

We are now ready to prove our main result.

\begin{theorem} \label{theorem: triangle-free main}
Let $G \not\cong C_5$ be a connected triangle-free $(m,n)$-graph. Then
$$
\chi(G) \leq \left\lceil \frac{m}{\sqrt{5.5n}} \right\rceil + 1.
$$
\end{theorem}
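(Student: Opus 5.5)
Suppose, for a contradiction, that $k=\chi(G)$ satisfies $k-1>\lceil m/\sqrt{5.5n}\rceil$. Since $k-1$ is an integer, this gives $k-2\ge\lceil m/\sqrt{5.5n}\rceil\ge m/\sqrt{5.5n}$, that is,
$$m\le (k-2)\sqrt{5.5n}.$$
The cases $k\le 2$ are immediate, because then the right-hand side is at most $0$ while $m\ge0$, and the bound holds trivially. For $k\ge3$, I would take a $k$-critical subgraph $H$ of order $n_H$. Since $H$ is triangle-free, Lemma~\ref{lemma: f(k)} gives $n_H\ge f(k)\ge \frac{k^2-k+4}{2}$, and of course $n\ge n_H$. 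Lemma~\ref{lemma: density} then gives
$$n+\frac{(k-3)(k^2-k+4)}{4}\le m\le (k-2)\sqrt{5.5n}.$$

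The main step is to compare the two sides as functions of $n$. The function $g(n)=(k-2)\sqrt{5.5n}-n$ is maximized at $\sqrt n=(k-2)\sqrt{5.5}/2$, with maximum value $5.5(k-2)^2/4$. So a contradiction follows whenever
$$(k-3)(k^2-k+4)> 5.5(k-2)^2.$$
The left side is cubic and the right side quadratic, so this holds for all large $k$. A quick check: at $k=7$ we get $4\cdot46=184>137.5$; at $k=6$ we get $3\cdot34=102>88$; at $k=5$ we get $2\cdot24=48<49.5$, which fails narrowly. Hence all $k\ge6$ are excluded, and $k\in\{3,4,5\}$ remains, which must be handled using the extra information $n\ge n_H$ and the exact values of $f$.

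For $k=3$ the inequality reads $n\le m\le\sqrt{5.5n}$, so $n\le5$. A connected triangle-free graph with $\chi=3$ and $n\le5$ contains an odd cycle of length at least $5$, so it is $C_5$ itself. Then $m=5\le\sqrt{27.5}\approx5.24$, and this is exactly the excluded graph. For $k=4$ we have $n\ge n_H\ge f(4)=11$ by the cited result, or $\ge 8$ from Lemma~\ref{lemma: f(k)}. Also $m\ge n+n_H/2$, and indeed $m\ge n+\frac{n_H}{2}\ge n+5.5$ using $n_H\ge11$. So we need $n+5.5\le2\sqrt{5.5n}$, i.e.\ $(\sqrt n-\sqrt{5.5})^2\le0$, forcing $n=5.5$, which contradicts $n\ge11$. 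This is where the constant $5.5$ comes from; it is tight against the Grötzsch graph. For $k=5$ we have $n_H\ge f(5)=22$, or at least $12$ from the lemma. Then $m\ge n+n_H\ge n+22$ against $m\le3\sqrt{5.5n}$. Since $n+22-3\sqrt{5.5n}$ has minimum $22-12.375>0$, this is a contradiction. If one wants to avoid citing $f(5)=22$, I would instead use $n_H\ge12$ together with $n\ge n_H$ and check the quadratic in $\sqrt n$ directly on the range $n\ge12$.

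The main obstacle is the small cases $k=4,5$. There the crude lower bound of Lemma~\ref{lemma: f(k)} ($8$ and $12$) may be insufficient, so the argument relies on the known exact values $f(4)=11$ and $f(5)=22$. With $n_H\ge8$ only, the case $k=4$ would give $m\ge n+4$, and $n+4\le2\sqrt{5.5n}$ is possible for moderate $n$. So the proof genuinely needs $f(4)=11$ from \cite{chvatal1974minimality} (the Grötzsch graph being extremal), and for $k=5$ it likely needs $f(5)=22$ or a careful check.
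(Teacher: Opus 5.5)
Your proposal is correct and follows the paper's proof essentially step for step: the same contradiction setup $m\le (k-2)\sqrt{5.5n}$, Lemma~\ref{lemma: density} combined with the AM--GM bound $m-n\le 5.5(k-2)^2/4$, the cubic inequality for $k\ge 6$, and the exact values $f(4)=11$, $f(5)=22$ for the small cases (your equality-case finish for $k=4$ is equivalent to the paper's rounding to $m\ge n+6$). Two small points: ``cubic beats quadratic'' plus checks at $k=6,7$ does not by itself cover every $k\ge 6$ (the paper closes this by noting that $2k^3-19k^2+58k-68$ is increasing for $k\ge 4$); and your fallback for $k=5$ using only $n_H\ge 12$ would not work, since $n=n_H=12$, $m=24\le 3\sqrt{66}$ satisfies all the inequalities, although the bound $f(5)\ge f(4)+4=15$ from Lemma~\ref{lemma: f(k)} would suffice.
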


\begin{proof}
Clearly, the assertion holds for $n=1$. So assume that $n \geq 2$ and by contradiction suppose that $k = \chi(G) \geq \left\lceil \frac{m}{\sqrt{5.5n}} \right\rceil + 2$. Clearly, $\left\lceil \frac{m}{\sqrt{5.5n}} \right\rceil \geq 1$ and therefore $k \geq 3$. Moreover, $k - 2 \geq \frac{m}{\sqrt{5.5n}}$, so
$$
m \leq (k-2)\sqrt{5.5n} \leq n + \frac{5.5(k-2)^2}{4}.
$$
Let $H$ be a $k$-critical subgraph of $G$, of order $n_H$ and size $m_H$. Then $H$ is triangle-free, so $n_H \geq f(k)$. By Lemma~\ref{lemma: density} and the fact that $m \leq n + \frac{5.5(k-2)^2}{4}$, we have
$$
\frac{(k-3)\,n_H}{2} \leq m-n \leq \frac{5.5(k-2)^2}{4}.
$$
We have four cases.
\\

\noindent \textbf{Case 1.} $k = 3$. By Lemma~\ref{lemma: density}, we have $m \geq n$. We also know that $m \leq (k-2) \sqrt{5.5n}$, so
$$
n \leq m \leq \sqrt{5.5n},
$$
so $n \leq 5$. It is easy to see that any $3$-critical graph is isomorphic to an odd cycle, so $H$ is an odd cycle, and since $G$ is triangle-free, the length of this cycle is at least $5$. Thus $5 \leq n_H \leq n \leq 5$ and therefore $H \cong C_5$. Now, $m \leq \sqrt{5.5n} = \sqrt{27.5} < 6$ gives $m = 5 = m_H$. Therefore, $G \cong C_5$, a contradiction.
\\

\noindent \textbf{Case 2.} $k = 4$. Then $\frac{(k-3)\,n_H}{2} \leq \frac{5.5(k-2)^2}{4}$ gives $n_H \leq 11$, so $n_H = 11$ because $n_H \geq f(4) = 11$~\cite{chvatal1974minimality}. Hence Lemma~\ref{lemma: density} gives $m \geq n + \frac{11}{2}$, and so $m \geq n+6$. We also have $m \leq (k-2)\sqrt{5.5n}$. Therefore $(n+6)^2 \leq 22n$, which is impossible.
\\

\noindent \textbf{Case 3.} $k = 5$. Then $\frac{(k-3)\,n_H}{2} \leq \frac{5.5(k-2)^2}{4}$ gives $n_H \leq 12.375 < 22 = f(5)$, see~\cite{jensen1995small}, a contradiction. 
\\

\noindent \textbf{Case 4.} $k \geq 6$. We know that $\frac{(k-3)\,n_H}{2} \leq \frac{5.5(k-2)^2}{4}$, and by Lemma~\ref{lemma: f(k)}, we have $n_H \geq \frac{k^2-k+4}{2}$. So
$$
\frac{(k-3)\,\frac{k^2-k+4}{2}}{2} \leq \frac{5.5(k-2)^2}{4}.
$$
Equivalently,
$$
2k^3-19k^2+58k-68 \leq 0.
$$
The derivative of the left-hand side is $6k^2-38k+58$, whose largest root is $\frac{19+\sqrt{13}}{6} < 4$, so the left-hand side is increasing for $k \geq 6$ and thus is at least $28$ for $k \geq 6$, a contradiction and the proof is complete.
\end{proof}

Theorem~\ref{theorem: triangle-free main} immediately settles Problem~\ref{problem: triangle-free}.

\begin{corollary} \label{corollary: problem}
If $G$ is a connected triangle-free $(m,n)$-graph, then
$$\chi(G) \leq \left\lceil \frac{m}{2\sqrt{n}} \right\rceil + 1.
$$
\end{corollary}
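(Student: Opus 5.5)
The corollary will follow from Theorem~\ref{theorem: triangle-free main} by monotonicity of the ceiling function, plus a direct check of the one graph that theorem excludes, namely $C_5$.

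\textbf{Case $G \not\cong C_5$.} Here Theorem~\ref{theorem: triangle-free main} applies. Since $\sqrt{5.5n} > 2\sqrt{n}$ for every $n \geq 1$ and $m \geq 0$, we have
\[
\frac{m}{\sqrt{5.5n}} \leq \frac{m}{2\sqrt{n}}.
\]
The ceiling function is nondecreasing, so the theorem gives
\[
\chi(G) \leq \left\lceil \frac{m}{\sqrt{5.5n}} \right\rceil + 1 \leq \left\lceil \frac{m}{2\sqrt{n}} \right\rceil + 1.
\]

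\textbf{Case $G \cong C_5$.} Then $m = n = 5$ and $\chi(G) = 3$. We have $\frac{5}{2\sqrt{5}} = \frac{\sqrt{5}}{2} \approx 1.118$, so its ceiling is $2$. Hence the right-hand side equals $3 = \chi(G)$ and the inequality holds with equality.

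There is no real obstacle. The only point needing care is the exceptional graph $C_5$: the stronger bound of the theorem genuinely fails there, since its right-hand side is $\lceil 5/\sqrt{27.5}\rceil + 1 = 2$, while the weaker constant $2$ in the corollary still accommodates it. The trivial case $n = 1$, where $m = 0$, is already covered by the theorem.
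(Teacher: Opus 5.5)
Your proof is correct and is the argument the paper intends: the corollary follows from Theorem~\ref{theorem: triangle-free main} because $\sqrt{5.5}>2$ and the ceiling function is nondecreasing. Your direct check that $\lceil \sqrt{5}/2\rceil+1=3=\chi(C_5)$ is the one step the paper leaves implicit; it records this only as the equality case in Remark~\ref{remark: sharpness}(i), and you handle it correctly.
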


\begin{remark} \label{remark: sharpness}
Both bounds in Theorems~\ref{theorem: triangle-free main} and~\ref{corollary: problem} are attained.
\begin{itemize}
  \item[(i)] Equality holds in Corollary~\ref{corollary: problem} for $C_5$.
  \item[(ii)] Equality holds in Theorem~\ref{theorem: triangle-free main} for the Gr\"{o}tzsch graph $G_{11}$ of Figure~\ref{figure: grotzsch}, for which $n = 11$, $m = 20$ and $\left\lceil \frac{20}{\sqrt{60.5}} \right\rceil + 1 = 4 = \chi(G_{11})$, as well as for every odd cycle $C_\ell$ with $7 \leq \ell \leq 21$.
\end{itemize}
\end{remark}

\begin{figure}[ht]
\centering
\begin{tikzpicture}[scale=1.1, every node/.style={circle, draw, fill=black, inner sep=1.6pt}]
  \foreach \i in {1,...,5} {
    \node (u\i) at ({90+72*(\i-1)}:2.5) {};
    \node (v\i) at ({90+72*(\i-1)}:1.35) {};
  }
  \node (w) at (0,0) {};
  \draw (u1)--(u2)--(u3)--(u4)--(u5)--(u1);
  \draw (v1)--(u2); \draw (v1)--(u5);
  \draw (v2)--(u3); \draw (v2)--(u1);
  \draw (v3)--(u4); \draw (v3)--(u2);
  \draw (v4)--(u5); \draw (v4)--(u3);
  \draw (v5)--(u1); \draw (v5)--(u4);
  \foreach \i in {1,...,5} { \draw (w)--(v\i); }
\end{tikzpicture}
\caption{The Gr\"{o}tzsch graph, the unique triangle-free $4$-chromatic graph of order $11$~\cite{chvatal1974minimality}}
\label{figure: grotzsch}
\end{figure}

The following remark locates the optimal constant in Theorem~\ref{theorem: triangle-free main} within a short interval.

\begin{remark} \label{remark: constant}
The constant $5.5$ in Theorem~\ref{theorem: triangle-free main} cannot be replaced by any $c \geq \sqrt{6}$. Indeed, let $G$ be the graph obtained from $C_5$ by attaching one pendant vertex. Then $G$ is connected and triangle-free with $n = m = 6$ and $\chi(G) = 3$, while $\left\lceil \frac{6}{c\sqrt{6}} \right\rceil + 1 = 2$ whenever $c \geq \sqrt{6}$.
\end{remark}

\bigskip

\bibliographystyle{plainurl}
\bibliography{refs.bib}

\end{document}